\newtheorem{thm}{Theorem}[section]
\newtheorem{cor}[thm]{Corollary}
\newtheorem{lem}[thm]{Lemma}
\newtheorem{prop}[thm]{Proposition}
\newtheorem{prob}[thm]{Problem}
\theoremstyle{definition}
\newtheorem{defin}[thm]{Definition}
\newtheorem{rem}[thm]{Remark}
\numberwithin{equation}{section}
\newcommand{\eps}{\varepsilon}
\begin{document}

%%%%% To ease editing, for IMPAN journals add:

\baselineskip=17pt

%%%%%%%%%%%%%%%%

\title{On Qian's problem for $\mathcal{L}_{\infty}$-spaces}

\author{Duanxu Dai\\
College of Mathematics and Computer Science\\
Quanzhou Normal University\\
Quanzhou 362000, China\\
E-mail: dduanxu@163.com
\and}

\date{}

\maketitle

%% Classification and key words; note that the 2010 classification is used:

\renewcommand{\thefootnote}{}

\footnote{2010 \emph{Mathematics Subject Classification}: Primary 46B04, 46B20; Secondary 46A22, 54C60.}

\footnote{\emph{Key words and phrases}: $\eps$-Isometry, Stability, Figiel theorem, Banach space.}

\footnote{ Supported by the Natural Science Foundation of China (Grant No. 11601264) and the Outstanding Youth Scientific Research Personnel Training Program of Fujian Province and the High level Talents Innovation and Entrepreneurship Project of Quanzhou City and the Research Foundation of Quanzhou Normal University(Grant No. 2016YYKJ12). }
\footnote{}

\renewcommand{\thefootnote}{\arabic{footnote}}
\setcounter{footnote}{0}

%%%%%%%%

\begin{abstract}
  In this paper we devote to study Qian's problem for $\mathcal{L}_{\infty}$-spaces. Firstly, a positive answer to Qian's problem for $C(K)$-spaces is given by the assumption that $K$ has the C$\check{e}$ch-Stone property. Secondly, we obtain quantitative characterizations of separably injective spaces that turn out to give a positive answer to Qian's problem of 1995 in the setting of separable universality. Thirdly, we prove a sharpen quantitative and generalized Sobczyk theorem, which gives sharpen constants ($\alpha,\gamma$) for Qian's Problem. Finally, we give a more generalized Figiel theorem for $\mathcal{L}_{\infty}$-spaces.
\end{abstract}

\section{Introduction}

Mazur and Ulam \cite{ma} in 1932 proved that every surjective isometry between two Banach spaces $X$ and $Y$ is necessarily affine. Since then, properties of isometries and generalizations there of between Banach spaces has continued for 86 years. On this period, many significant problems about perturbation properties of surjective $\eps$-isometries were proposed and solved by numerous mathematicians. In particular, we mention the Hyers-Ulam problem \cite{hy} (see, for instance, \cite{ge}, \cite{gru}, and  \cite{om}). In 1968, Figiel \cite{figiel} showed the following remarkable result(Figiel theorem): For every standard isometry $f:X\rightarrow Y$  there is a linear operator $T:L(f)\rightarrow X$ with $\|T\|=1$ so that $Tf=Id$ on $X$, where $L(f)$ is the closure of span $f(X)$ in $Y$ (see also \cite{ben} and \cite{du}).

\begin{defin} Let $X,Y$ be two Banach spaces, $\eps\geq0$, and let $f:X\rightarrow Y$ be a mapping.

(1) $f$ is said to be an $\eps$-isometry if
\begin{align} |\|f(x)-f(y)\|-\|x-y\||\leq\eps\;\;{\rm for\; all}\; x,y\in X.
\end{align}
In particular, a $0$-isometry $f$ is simply called an isometry.

(2) We say  an $\eps$-isometry $f$ is standard if $f(0)=0$.

(3) A standard $\eps$-isometry $f$ is $(\alpha,\gamma)$-stable if there exist $\alpha, \gamma>0$ and a bounded linear operator $T:L(f)\rightarrow X$ with $\|T\|\leq\alpha$ such that
\begin{align}\label{E:1.2}\|Tf(x)-x\|\leq\gamma\eps,\;\;{\rm for\;all\;}x\in X.
\end{align}
In this case, we also simply say $f$ is stable, if no confusion arises.

(4) A pair $(X,Y)$ of Banach spaces $X$ and $Y$ is said to be stable  if
   every standard $\eps$-isometry $f:X\rightarrow Y$ is $(\alpha,\gamma)$-stable for some $\alpha,\gamma>0$.

(5) A pair $(X,Y)$ of Banach spaces $X$ and $Y$  is called  $(\alpha,\gamma)$-stable for some $\alpha,\gamma>0$ if every standard $\eps$-isometry $f:X\rightarrow Y$ is $(\alpha,\gamma)$-stable.
\end{defin}

The study of non-surjective $\eps$-isometries has also been considered (see, for instance, \cite{bao}, \cite{cheng}, \cite{cheng2}, \cite{Dai}, \cite{dil}, \cite{om}, \cite{qian}, \cite{sm} and \cite{ta}).
Qian\cite{qian} proposed the following problem in 1995.

\begin{prob} \label{P1} Is it true that for every pair $(X,Y)$ of Banach spaces $X$ and $Y$ there exists $\gamma>0$ such that every standard
$\varepsilon$-isometry $f:X\rightarrow Y$ is $(\alpha,\gamma)$-stable for some $\alpha>0$?
\end{prob}
\noindent

However, Qian  \cite{qian} presented a counterexample showing that if a separable Banach space $Y$ contains an uncomplemented closed subspace $X$ then for every $\eps>0$ there is a standard $\eps$-isometry $f:X\rightarrow Y$ which is not stable. Recently, Cheng et al \cite{cheng3} showed the following sharp weak stability version.

\begin{thm}[Cheng et al]\label{T:1.3}
 Let $X$ and $Y$ be Banach spaces,
  and let $f:X\rightarrow Y$ be a standard $\eps$-isometry for
 some $\eps\geq 0$. Then for every $x^*\in X^*$, there exists $\phi\in Y^*$ with $\|\phi\|=\|x^*\|\equiv r$ such that
 \begin{align*} |\langle\phi,f(x)\rangle-\langle x^*,x\rangle|\leq2\eps r, \; for\;all\;x\in X.\end{align*}
 \end{thm}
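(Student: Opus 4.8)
The plan is to obtain $\phi$ from a Hahn--Banach argument, after which the whole statement reduces to one inequality about finite linear combinations, and then to extract that inequality from the defining estimate of a standard $\eps$-isometry. First, if $r=\|x^{*}\|=0$ take $\phi=0$; otherwise replace $x^{*}$ by $x^{*}/r$, so that it suffices to treat the case $\|x^{*}\|=1$ and produce $\phi\in Y^{*}$ with $\|\phi\|=1$ and $|\langle\phi,f(x)\rangle-\langle x^{*},x\rangle|\le 4\eps$ for all $x\in X$; rescaling $\phi\mapsto r\phi$ at the end restores $\|\phi\|=r$ and the bound $4\eps r$.

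Next, define $p\colon Y\to\R\cup\{-\infty\}$ by
\[
p(y)=\inf\Bigl\{\,\sum_{i=1}^{n}\lambda_{i}\langle x^{*},x_{i}\rangle+\Bigl\|y-\sum_{i=1}^{n}\lambda_{i}f(x_{i})\Bigr\|+4\eps\sum_{i=1}^{n}|\lambda_{i}|\,\Bigr\},
\]
the infimum over all finite systems $n\in\N$, $\lambda_{1},\dots,\lambda_{n}\in\R$, $x_{1},\dots,x_{n}\in X$, with the empty system allowed (so $p(y)\le\|y\|$). Concatenating systems shows $p$ is subadditive, scaling the $\lambda_{i}$ shows it is positively homogeneous, and the one-term systems $\lambda_{1}=\pm1$, $x_{1}=x$ give $p(\pm f(x))\le\mp\langle x^{*},x\rangle+4\eps$. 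Hence, provided $p(0)=0$ -- which already forces $p(y)\ge -p(-y)\ge -\|y\|$ by sublinearity, so $p$ is then real-valued and sublinear with $p\le\|\cdot\|$ -- any linear functional $\phi\le p$ supplied by Hahn--Banach (extending the zero functional) satisfies $\|\phi\|\le 1$ and $|\langle\phi,f(x)\rangle-\langle x^{*},x\rangle|\le 4\eps$ for every $x$. Thus everything hinges on the master inequality, equivalently on the statement $p(0)=0$: for all $n\in\N$, $a_{i}\in\R$ and $x_{i}\in X$,
\begin{equation*}
\Bigl|\sum_{i=1}^{n}a_{i}\langle x^{*},x_{i}\rangle\Bigr|\le\Bigl\|\sum_{i=1}^{n}a_{i}f(x_{i})\Bigr\|+4\eps\sum_{i=1}^{n}|a_{i}|.
\tag{$\star$}
\end{equation*}
(Equivalently, by $w^{*}$-compactness of $B_{Y^{*}}$ and the separation theorem, $(\star)$ is precisely the condition under which such a $\phi$ exists.)

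Proving $(\star)$ is the crux, and the step I expect to be the main obstacle. Here one exploits the two facts valid for a standard $\eps$-isometry with $\|x^{*}\|=1$, namely $\|f(x)-f(z)\|\ge\|x-z\|-\eps$ and $\langle x^{*},x-z\rangle\le\|x-z\|$ for all $x,z\in X$. The strategy is: split $a_{i}=a_{i}^{+}-a_{i}^{-}$, pick (to within any prescribed tolerance) test points $z$ and $-z$ running off to infinity along a direction that nearly norms $x^{*}$, bound each $a_{i}\langle x^{*},x_{i}\rangle$ through $\|f(x_{i})-f(z)\|$, and recombine the positive and negative parts by the triangle inequality so that the cancellation present in $\sum_{i}a_{i}f(x_{i})$ is preserved, the constant $4$ appearing as the accumulation of a bounded number of $\eps$-gaps. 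The genuine difficulty is that one needs a \emph{single} linear functional approximating $\langle x^{*},\cdot\rangle$ through $f$ from above \emph{and} from below at once, and the natural objects built from $f$ -- such as $z\mapsto\inf_{x\in X}[\langle x^{*},x\rangle+\|f(x)-z\|]$ -- are in general neither convex nor concave, so subdifferentials and off-the-shelf minimax theorems do not directly apply; the $\eps$-geometry of $f$ must be used in an essential way. (For $\eps=0$ this is Figiel's theorem, and $(\star)$ says just that $\sum_{i}a_{i}f(x_{i})=0$ implies $\sum_{i}a_{i}\langle x^{*},x_{i}\rangle=0$.)

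Finally, the construction gives $\|\phi\|\le 1$; for the reverse inequality choose, for each $t>0$, a vector $x_{t}\in X$ with $\|x_{t}\|=t$ and $\langle x^{*},x_{t}\rangle\ge t-1/t$ (possible since $\|x^{*}\|=1$). Then $\langle\phi,f(x_{t})\rangle\ge\langle x^{*},x_{t}\rangle-4\eps\ge t-1/t-4\eps$ while $\|f(x_{t})\|\le\|x_{t}\|+\eps=t+\eps$, so $\|\phi\|\ge(t-1/t-4\eps)/(t+\eps)\to 1$ as $t\to\infty$; hence $\|\phi\|=1$. Undoing the normalisation of the first paragraph gives the required $\phi$.
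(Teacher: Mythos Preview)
The paper itself contains no proof of Theorem~1.3: it is quoted from Cheng, Dong and Zhang \cite{cheng} and used as a black box throughout, so there is no in-paper argument to compare against.

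Judged on its own, your Hahn--Banach reduction is sound: the sublinear functional $p$ is correctly set up, the empty system gives $p\le\|\cdot\|$, and the existence of $\phi\in B_{Y^{*}}$ with $|\langle\phi,f(x)\rangle-\langle x^{*},x\rangle|\le 4\eps$ is indeed equivalent to $p(0)\ge 0$, i.e.\ to your inequality $(\star)$. The closing paragraph forcing $\|\phi\|=1$ is also correct. The genuine gap is precisely the one you flag yourself: you do not prove $(\star)$. What you give in its place is a heuristic (split signs, push a test point $z$ to infinity along a near-norming direction, bound individual terms through $\|f(x_i)-f(z)\|$, recombine), immediately followed by your own acknowledgement that this does not obviously close, because termwise use of the triangle inequality destroys the cancellation in $\sum_i a_i f(x_i)$ and the natural auxiliary functions built from $f$ lack convexity. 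So the proposal is a correct reformulation of the theorem plus an unexecuted plan for its hard half; as written it is not a proof.

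For orientation, the argument in \cite{cheng} does not attack $(\star)$ head-on. One fixes $x^{*}\in S_{X^{*}}$, chooses $u_n\in X$ with $\|u_n\|\to\infty$ nearly norming $x^{*}$, takes $\phi_n\in S_{Y^{*}}$ norming $f(u_n)$, and shows by direct two-point estimates from the $\eps$-isometry inequality that every $w^{*}$-cluster point $\phi$ of $(\phi_n)$ satisfies $|\langle\phi,f(x)\rangle-\langle x^{*},x\rangle|\le 4\eps$ for each fixed $x$. The point is that one only ever compares $f(x)$ against a single far-away $f(\pm u_n)$, never a general linear combination, so the cancellation problem you identify simply does not arise; $(\star)$ then follows \emph{a posteriori} from the existence of $\phi$, rather than being the route to it.
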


For study of the stability of $\eps$-isometries of Banach spaces, the following question was proposed in \cite{cheng2}.
\begin{prob}
Is there a  characterization for the class of Banach spaces $\mathcal{X}$ satisfying given any $X\in \mathcal{X}$ and Banach space $Y$, the pair $(X,Y)$ is $($$(\alpha,\gamma)$-, resp.$)$ stable?
\end{prob}

Every space $X$ of this class is said to be a universally ($(\alpha,\gamma)$-, resp.) left-stable space.
On one hand, Cheng, Dai, Dong et.al. \cite{cheng2} proved that every injective Banach space is a universally left-stable space. On the other hand, the first two authors Cheng and Dai, together with others \cite{bao} showed that every universally left-stable space is just a cardinality injective Banach space (i.e., a Banach space which is complemented in every superspace with the same cardinality) and they also showed that a dual space is injective (i.e., $X$ is complemented in $\ell_\infty(B_{X^*})$)if and only if it is a universally left-stable space.

This paper devotes to study Qian's problem for $\mathcal{L}_{\infty}$-spaces as follows. In Section 3, we obtain a weak positive answer to Qian's problem for $C(K)$-spaces (see Corollary \ref{main3}). Then by assuming that $K$ has the C$\check{e}$ch-Stone property, a positive answer for such a $C(K)$-space is given.
The following Problem \ref{P} is also very natural.

\begin{prob}\label{P}
Is there a  characterization for the class of Banach spaces $\mathcal{S}$ satisfying given any $X\in \mathcal{S}$ and separable Banach space $Y$, the pair $(X,Y)$ is $((\alpha,\gamma)$-, resp.$)$ stable?
\end{prob}

Every space $X$ of this class is said to be a separably universally (resp. $(\alpha,\gamma)$) left-stable space. In Section 4, we will show that all of these spaces of the class $\mathcal{S}$ coincide with separably injective Banach spaces. We here refer the reader to a very excellent paper \cite{ASC} by Avil$\acute{e}$s-S$\acute{a}$nchez-Castillo-Gonz$\acute{a}$lez-Moreno for further information about injective Banach spaces and separably injective Banach spaces where they resolved (under an additional assumption) a long standing problem proposed by Lindenstrauss in the middle sixties.

The following theorem was proved by Sobczyk \cite{sob} which says that $c_0$ is separable separably injective space( A Banach space $X$ is said to be $\lambda$-separably injective if it has the following extension property: Every bounded linear operator $T$ from a closed subspace of a  separable Banach space into $X$ can be extended to be a bounded operator on the whole space with its norm at most $\lambda\|T\|$. In this case, $X$ is said to be separably injective if it is $\lambda$-separably injective for some $\lambda\geq 1$ \cite{zip}) while Zippin \cite{zip} showed that $c_0$ is the unique separable separably injective space, up to an isomorphism.
In 2014, Cheng, Dai et al \cite{cheng2} proved that $c_0$, up to an isomorphism, is the unique separable space such that the couple $(c_0, Y)$ is stable for every separable space $Y$.

\begin{thm}[Sobczyk theorem \cite{sob}]\label{T:1.4}
Let $X$ be a separable Banach space. If $E$ is a closed subspace of $X$ and $T: E\rightarrow c_0$ is a bounded operator then there exists an operator $\widetilde{T}: X \rightarrow c_0$ such that $\widetilde{T}|_E=T$ and $\|\widetilde{T}\|\leq 2\|T\|$.
\end{thm}

In section 5, we also prove a sharpen quantitative and generalized Sobczyk theorem (see Theorem \ref{T:1.4}), that is, Theorem \ref{T:3.7}, which gives examples of nonseparable separably injective spaces $X$ (but not injective, i.e., $X$ is not complemented in $\ell_\infty(B_{X^*})$) such that for some sharpen constants $\alpha,\gamma>0$, the couple $(X,Y)$ is $(\alpha,\gamma)$-stable for every separable space $Y$. In Section 6, we prove a more generalized Figiel theorem for $\mathcal{L}_{\infty,\lambda}$-spaces ( see \cite{AH}, \cite {ASC}, \cite{Bou}).

All symbols and notations in this paper are standard. We use $X$ to denote a real Banach space and $X^*$ its dual. $B_X$, ext $(B_{X^*})$ and $S_X$ denote the closed unit ball of $X$, the set of all extremal points of $B_{X^*}$ and the unit sphere of $X$, respectively. For a subset $A\subset X$, $\overline{A}$ and card $(A)$ stand respectively for the closure of $A$, the cardinality of $A$. Given a bounded linear operator $T:X\rightarrow Y$, $T^*:Y^*\rightarrow X^*$ stands for its conjugate operator. We denote by $d(X,Y)=\inf\{\|T\|\cdot\|T^{-1}\|: T\;\text {is\;an\;isomorphism\; between}\;X\;\text{and} \;Y \}$ the Banach-Mazur distance between $X$ and $Y$.

\section{Preliminaries }

Recall that a Banach space $X$ is said to be $\lambda$-(resp. separably injective) injective if it has the following extension property:
Every bounded linear operator $T$ from a closed subspace of a (resp. separable) Banach space into $X$ can be extended to be a bounded operator on the whole space with its norm at most $\lambda\|T\|$ (see, for instance, \cite{Alb}, \cite{ASC}, \cite{fa}, \cite{Wo}, \cite{zip}). In this case, $X$ is said to be injective (resp. separably injective) if it is $\lambda$-(resp. separably injective) injective for some $\lambda\geq 1$.

The following Proposition \ref{P:3.1} follows easily from Remark \ref{R:3.2}.

\begin{prop}\label{P:3.1} A (resp. separable) Banach space $X$ is $\lambda$-(resp. separably injective) injective if and only if it is $\lambda$-complemented in every (resp. separable) superspace (i.e., a normed linear space which contains $X$).
\end{prop}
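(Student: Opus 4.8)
The plan is to prove the two implications separately; the forward one is immediate, and the converse is the standard pushout (amalgamation) argument.

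\emph{($\lambda$-injective $\Rightarrow$ $\lambda$-complemented in every superspace).} Given a (resp.\ separable) superspace $Z\supset X$, I would apply the extension property to $\mathrm{Id}_X\colon X\to X$, viewed as a norm-one operator from the closed subspace $X$ of $Z$ into $X$. This yields $P\colon Z\to X$ with $P|_X=\mathrm{Id}_X$ and $\|P\|\le\lambda$; such a $P$ is a norm-$\le\lambda$ projection of $Z$ onto $X$, so $X$ is $\lambda$-complemented in $Z$.

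\emph{Converse.} Suppose $X$ is $\lambda$-complemented in every (resp.\ separable) superspace. To verify the extension property, let $E$ be a closed subspace of a (resp.\ separable) Banach space $W$, with inclusion $j\colon E\hookrightarrow W$, and let $T\colon E\to X$ be bounded. If $T=0$ there is nothing to prove; otherwise, rescaling, I may assume $\|T\|\le1$ and recover the constant $\lambda\|T\|$ for general $T$ at the end. I would form the pushout of $j$ and $T$,
\[
  \mathrm{PO}=(W\oplus_1 X)/N,\qquad N=\{(j(e),-T(e)):e\in E\},\quad \|(w,x)\|=\|w\|+\|x\|,
\]
together with the canonical maps $i_W(w)=(w,0)+N$ and $i_X(x)=(0,x)+N$, which are contractions and satisfy $i_W\circ j=i_X\circ T$. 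First I would check that $N$ is closed (the map $e\mapsto(j(e),-T(e))$ is an isomorphism onto $N$, its first coordinate being isometric), so that $\mathrm{PO}$ is a Banach space, and that $\mathrm{PO}$ is separable whenever $W$ and $X$ are. Next, the crucial point: $i_X$ is an \emph{isometric} embedding, because for $x\in X$
\[
  \|i_X(x)\|=\inf_{e\in E}\bigl(\|e\|+\|x-T(e)\|\bigr)\ge\inf_{e\in E}\bigl(\|e\|+\|x\|-\|e\|\bigr)=\|x\|,
\]
using $\|T(e)\|\le\|e\|$, while the choice $e=0$ gives equality. Then $i_X(X)$ is an isometric copy of $X$ inside the (resp.\ separable) superspace $\mathrm{PO}$, so by hypothesis there is a projection $P\colon\mathrm{PO}\to i_X(X)$ with $\|P\|\le\lambda$, and $\tilde T:=i_X^{-1}\circ P\circ i_W\colon W\to X$ satisfies $\|\tilde T\|\le\lambda$ (as $i_W$ is a contraction and $i_X$ an isometry) and $\tilde T\circ j=i_X^{-1}\circ P\circ i_X\circ T=T$ (as $P$ fixes $i_X(X)$). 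Undoing the normalization yields the extension constant $\lambda\|T\|$ in general, so $X$ is $\lambda$-(resp.\ separably) injective.

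\emph{Main obstacle.} There is no serious difficulty, but the work is concentrated in the pushout step: verifying that $N$ is closed (so $\mathrm{PO}$ is complete), that $i_X$ is genuinely \emph{isometric} and not merely isomorphic — this is where the normalization $\|T\|\le1$ and the estimate $\|x-T(e)\|\ge\|x\|-\|e\|$ enter, and it is what forces the extension constant to be exactly $\lambda$ rather than something larger — and, for the separable statement, that $\mathrm{PO}$ inherits separability from $W$ and $X$, which is precisely what allows $i_X(X)\subset\mathrm{PO}$ to be fed into the separable-complementation hypothesis.
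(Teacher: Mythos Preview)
Your argument is correct, but it follows a different route from the paper's. The paper does not give a detailed proof; it merely says the proposition ``follows easily from Remark~\ref{R:3.2}'', i.e.\ from the Hahn--Banach fact that $\ell_\infty(\Gamma)$ is $1$-injective. The intended argument (carried out explicitly in the proof of Proposition~\ref{pro}) is: given $T\colon E\to X$ with $E\subset W$, embed $X$ isometrically via $J$ into $\ell_\infty(B_{X^*})$, extend $J\circ T$ coordinatewise to a norm-preserving $\tilde S\colon W\to\ell_\infty(B_{X^*})$, set $Y'=\overline{\mathrm{span}}\,(J(X)\cup\tilde S(W))$, apply the hypothesis to obtain a projection $P\colon Y'\to J(X)$ with $\|P\|\le\lambda$, and take $\tilde T=J^{-1}P\tilde S$; in the separable case $Y'$ is separable because $X$ and $W$ are. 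You instead build the superspace directly as the pushout $(W\oplus_1 X)/N$, avoiding any appeal to an auxiliary injective space. Your approach is more self-contained and makes the isometric embedding of $X$ transparent (via the normalization $\|T\|\le1$), while the paper's approach is shorter once one accepts the $1$-injectivity of $\ell_\infty(\Gamma)$ as known. Both yield the exact constant $\lambda$ and handle the separable case by the same observation that the constructed superspace inherits separability from $W$ and $X$.
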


The following Proposition \ref{P:3.2} was proved by Avil$\acute{e}$s, S$\acute{a}$nchez, Castillo, Gonz$\acute{a}$lez and Moreno (see \cite[Prop. 3.2]{ASC}).

\begin{prop}\label{P:3.2}\rm
(1) If a Banach space $X$ is $\lambda$-separably injective, then it is $3\lambda$-complemented in every superspace $Y$ such that $Y/X$ is separable.

(2) If a Banach space $X$ is $\lambda$-complemented in every superspace $Y$ such that $Y/X$ is separable, then $X$ is $\lambda$-separably injective.
\end{prop}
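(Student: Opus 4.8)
The plan is to prove the two implications separately; both are standard pushout/gluing constructions, and the real work lies in tracking norms so the constants come out as claimed.

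For (2) I would argue by a pushout. Given a bounded linear $T\colon E\to X$ with $E$ a closed subspace of a separable Banach space $Z$, rescale so that $\|T\|\le1$, and form the pushout $P_0=(X\oplus_1 Z)/N$, where $N=\{(Te,-e):e\in E\}$ is closed since $E$ is closed and $T$ is bounded, with canonical maps $i_X\colon x\mapsto[(x,0)]$ and $i_Z\colon z\mapsto[(0,z)]$, which satisfy $i_Z|_E=i_X T$. Because $E\hookrightarrow Z$ is isometric and $\|T\|\le1$, a one-line computation gives $\|[(x,0)]\|=\inf_{e\in E}(\|x-Te\|+\|e\|)=\|x\|$, so $i_X$ is an isometric embedding, while $\|i_Z\|\le1$; and one checks that $P_0/i_X(X)$ is isometric to $Z/E$, hence separable. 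By hypothesis $i_X(X)$ is then $\lambda$-complemented in $P_0$; taking a projection $\Pi\colon P_0\to i_X(X)$ with $\|\Pi\|\le\lambda$, the operator $\widetilde T:=i_X^{-1}\Pi i_Z\colon Z\to X$ (well defined since $i_X$ is an isometry onto its image) satisfies $\|\widetilde T\|\le\lambda$ and $\widetilde T|_E=T$, because $i_Z|_E=i_X T$ and $\Pi$ fixes $i_X(X)$. Undoing the rescaling gives $\|\widetilde T\|\le\lambda\|T\|$, so $X$ is $\lambda$-separably injective.

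For (1) I would use a gluing argument. Writing $Q\colon Y\to Y/X$ for the quotient map, with $Y/X$ separable, I would first produce a separable closed subspace $S\subseteq Y$ with $X+S=Y$ and $Q|_S$ a metric surjection: choose $(z_k)$ dense in the open unit ball of $Y/X$, lift each to $\widehat s_k\in Y$ with $Q\widehat s_k=z_k$ and $\|\widehat s_k\|<1$, and set $S=\overline{\operatorname{span}}\{\widehat s_k:k\ge1\}$; then $Q(B_S)$ contains a dense subset of the open unit ball of $Y/X$, which forces $Q|_S$ to be a metric surjection, in particular onto, so $Y=X+S$. Put $E:=X\cap S$, a separable closed subspace of $X$, and extend the inclusion $E\hookrightarrow X$, using $\lambda$-separable injectivity, to $j\colon S\to X$ with $\|j\|\le\lambda$ and $j|_E=\mathrm{id}_E$. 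Define $P\colon Y\to X$ by $P(x+s)=x+j(s)$ for $x\in X$, $s\in S$; this is well defined because two decompositions of one vector differ by an $e\in X\cap S=E$, which $j$ fixes, $P$ is linear, and $P|_X=\mathrm{id}_X$, so $P$ is a projection of $Y$ onto $X$. For the norm, from $P(y)=y-(s-j(s))$ we get $\|P(y)\|\le\|y\|+(1+\lambda)\|s\|$, and since $Q|_S$ is a metric surjection we may, for any $\delta>0$, take the decomposition with $\|s\|\le\|Qy\|+\delta\le\|y\|+\delta$; as the vector $P(y)$ does not depend on this choice, letting $\delta\to0$ yields $\|P(y)\|\le(2+\lambda)\|y\|\le3\lambda\|y\|$, using $\lambda\ge1$. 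Hence $X$ is $3\lambda$-complemented in $Y$.

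The step I expect to be the main obstacle, in both parts, is securing the isometric data that make the constants absolute rather than dependent on the auxiliary spaces: in (2), verifying that $i_X$ is an isometric embedding (which is exactly where $\|T\|\le1$ and the isometricity of $E\hookrightarrow Z$ enter) and that $P_0/i_X(X)\cong Z/E$ isometrically; in (1), arranging $S$ so that $Q|_S$ is a genuine metric surjection, since only then can $\|s\|$ be brought down to essentially $\|Qy\|\le\|y\|$, which is what produces the bound $2+\lambda$, a fortiori $3\lambda$, instead of a constant depending on $S$.
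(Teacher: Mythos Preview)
The paper does not give its own proof of this proposition; it is simply quoted from Avil\'es--S\'anchez--Castillo--Gonz\'alez--Moreno \cite[Prop.~3.2]{ASC}, so there is no in-paper argument to compare against. Your proof is correct and is essentially the standard one found in \cite{ASC}: the pushout construction for part~(2) and the separable-lifting plus gluing for part~(1) are precisely the devices used there. Two remarks: in~(1) the passage from ``$Q(B_S)$ contains a dense subset of the open unit ball of $Y/X$'' to ``$Q|_S$ is a metric surjection'' is a genuine (if routine) successive-approximation step, which you might spell out; and the bound $2+\lambda$ you obtain is in fact the constant in \cite{ASC}, with $3\lambda$ following only because $\lambda\ge1$.
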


\begin{rem}\label{R:3.2}
For any set $\Gamma$, that  $\ell_\infty(\Gamma)$ is $1$-injective follows from the Hahn-Banach theorem.
\end{rem}

Recall that $\mathcal{S}$ is the class of Banach spaces satisfying given any $X\in \mathcal{S}$ and separable Banach space $Y$, the pair $(X,Y)$ is ($(\alpha,\gamma)$-, resp.) stable. Every space $X$ of this class is said to be a separably universally ($(\alpha,\gamma)$-, resp.) left-stable space.
In section 3, we completely solve Problem \ref{P}. That is, we prove that all of these spaces of the class $\mathcal{S}$ coincide with separably injective Banach spaces.

\begin{lem}\label{main2}
Suppose that $X$, $Y$ are Banach spaces. Let $\varepsilon\geq 0$. Assume that $f$ is a $\varepsilon-$ isometry
from $X$ into $Y$ with $f(0)=0$. Then for every $w^*$-dense subset $\Omega\subset$ ext $(B_{X^*})$ there is a bounded linear operator $T: Y\rightarrow \ell_\infty(\Omega)$ such that
$$\|T f(x)-x\|\leq 2\varepsilon,\;\;\text{for\;all}\;x\in X .$$

\end{lem}

\begin{proof} By Theorem \ref{T:1.3}, for every $x^*\in \Omega$, there exists a functional $Q(x^*)\in S_{Y^*}$ such that
\begin{align*} |\langle Q(x^*),f(x)\rangle-\langle x^*,x\rangle|\leq2\eps , \; for\;all\;x\in X.\end{align*}
We now define a mapping $T:Y\rightarrow \ell_\infty(\Omega)$ by
$$T(y)=\{Q(x^*)(y)\}_{x^*\in\Omega}.$$
It is clear that $T$ is a bounded linear operator with norm one and
$$\|T f(x)-x\|=\sup_{x^*\in\Omega}|Q(x^*)f(x)-x^*(x)|\leq 2 \varepsilon,\;\;\text{for\;all}\;x\in X .$$

\end{proof}

The following Lemma \ref{L:1.1} follows from Qian's counterexample in \cite{qian} (see also \cite{cheng2}).

\begin{lem}\label{L:1.1}
Let $X$ be a closed subspace of Banach space $Y$. If $\text{card}\;(X)=\text{card}\;(Y)$, then for every $\eps>0$ and every bijective mapping $g: X\rightarrow B_Y$ with $g(0)=0$, there is a standard $\eps$-isometry $f:X\rightarrow Y$ defined for all $x\in X$ by $f(x)=x+\frac{\varepsilon}{2}g(x)$
such that

(1)  $L(f)\equiv\overline{\rm {span}}\;f(X)=Y$;

(2)  $X$ is $\lambda$ complemented in $Y$ whenever $f$ is $(\lambda,\gamma)$ stable for some $\lambda$, $\gamma>0$.
\end{lem}

\section{On Qian's problem for $C(K)$-spaces }

Recall that a dual Banach space $Y^*$ is said to have the point of weak star to norm continuity property (in short, $w^*$-PCP) if every nonempty bounded subset of $Y^*$ admits relative weak star neighborhoods of arbitrarily small norm diameter. For example, if $Y$ is an Asplund space, then $Y^*$ has the $w^*$-PCP (see, for instance, \cite{Phe}).

Recall that a set valued mapping $F:X \rightarrow 2^{Y}$ is said to be
usco provided it is nonempty compact valued and upper
semicontinuous, i.e., $F(x)$ is nonempty compact for each $x\in X$
and $\{x\in X: F(x)\subset U\}$ is open in $X$ whenever $U$ is open
in $Y$. We say that $F$ is usco at $x\in X$ if $F$ is nonempty
compact valued and upper semicontinuous at $x$, i.e., for every open
set $V$ of $Y$ containing $F(x)$ there exists a open neighborhood $U$
of $X$ such that $F(U)\subset V$. Therefore, $F$ is usco if and only
if $F$ is usco at each $x\in X$.

Recall that a mapping $\varphi: X \rightarrow Y$ is called a selection of $F$
if $\varphi(x)\in F(x)$ for each $x\in X$, moreover, we say
$\varphi$ is a continuous (linear) selection of $F$ if $\varphi$ is
a continuous (linear) mapping. We denote the graph of $F$ by
$G(F)\equiv\{(x,y)\in X\times Y:y\in F(x)\}$, we write $F_1\subset
F_2$ if $G(F_1)\subset G(F_2)$. A usco mapping $F$ is said to be minimal
if $E=F$ whenever $E$ is a usco mapping and $E \subset F$ (see, for instance, \cite {Dai}, \cite[page 19, 102-109]{Phe}).

The following Problem \ref{P2} is equivalent to Problem \ref{P1}.

\begin{prob}\label{P2}Does there exist a constant $\gamma>0$ depending
 only on $X$ and $Y$ with the following property: For each $\varepsilon$-isometry $f:$ $X\rightarrow Y$ with
$f(0)=0$ there is a $w^*-w^*$ continuous linear selection $Q$ of the
set-valued mapping $\Phi$ from $X^*$ into $2^{L(f)^*}$ defined by
$$\Phi(x^*)=\{\phi\in L(f)^*:|\langle
\phi,f(x)\rangle-\langle x^*,x\rangle|\leq\gamma\|x^*\|\varepsilon
,\;\;\text{for\;all}\;x\in X \},$$ where $L(f)$ $=$ $\overline{\text{span}}\;f(X)$?
\end{prob}

The following Lemma \ref{main1} was motivated by Dai et.al. in \cite[Lemma 4.2]{Dai}. By an analogous argument we conclude the result on $w^*-w^*$ usco mappings, which will be used to prove Corollary \ref{main3}.

\begin{lem}\label{main1}
Suppose that $X$, $Y$ are Banach spaces. Let $\varepsilon\geq 0$. Assume that $f$ is a $\varepsilon-$ isometry
from $X$ into $Y$ with $f(0)=0$, $H$ itself is a Baire subspace contained in $S_{X^*}$ with respect to $w^*$-topology. If  we define a set-valued
mapping $\Phi_1:S_{X^*}\rightarrow 2^{S_{L(f)^*}}$ by
$$\Phi_1(x^*)=\{\phi\in S_{L(f)^*}:|\langle
\phi,f(x)\rangle-\langle x^*,x\rangle|\leq 4 \varepsilon
,\;\;\text{for\;all}\;x\in X \},$$ where $L(f)$ $=$ $\overline{\rm{span}}\,f(X)$, then

(i) $\Phi_1$ is convex $w^*$-usco at
each point of $S_{X^*}$.

(ii) There exists a minimal convex $w^*-w^*$ usco mapping contained in
$\Phi_1$.

(iii) If, in addition, $Y^*$ has the $w^*$-PCP (especially, if $Y$ is an Asplund space) or $Y$ is separable, then there exists a selection $Q$ of $\Phi_1$ such that $Q$ is $w^*-w^*$ continuous on a $w^*$-dense $G_\delta$ subset of $H$.
\end{lem}

\begin{proof} (i) It follows easily from \cite[Lemma 4.2 (i)]{Dai}.

 (ii)By Zorn Lemma (see \cite[Lemma 4.2 (ii)]{Dai} or \cite[Prop.7.3, p.103]{Phe}) there exists a minimal convex $w^*-w^*$ usco mapping contained in $\Phi_1$.

(iii) By (ii) there is a minimal convex $w^*-w^*$ usco mapping $F\subset\Phi_1$, and $H$ itself is a Baire space with respect to $w^*$-topology, and $Y^*$ has the $w^*$-PCP (especially, if $Y$ is an Asplund space) or $Y$ is separable,
which follows easily from \cite[Lemma 7.14, p.106-107]{Phe} and \cite[Lemma 4.2 (iii)]{Dai}.
\end{proof}

\begin{rem} The above Lemma \ref{main1} also holds if we substitute $Y^*$ and $S_{Y^*}$ for $L(f)^*$ and $S_{L(f)^*}$, respectively.
\end{rem}

\begin{lem} \label{main2}
Suppose that $X$, $Y$ are Banach spaces. Let $\varepsilon\geq 0$. Assume that $f$ is a $\varepsilon-$ isometry
from $X$ into $Y$ with $f(0)=0$. Then

(1) for every $w^*$-dense subset $\Omega\subset$ ext $(B_{X^*})$ there is a bounded linear operator $T: Y\rightarrow \ell_\infty(\Omega)$ such that
$$\|T f(x)-x\|\leq 2\varepsilon,\;\;\text{for\;all}\;x\in X .$$

(2) If $Y^*$ has the $w^*$-PCP or $Y$ is separable, then there exists a $w^*$-dense $G_\delta$ subset $\Omega\subset$ ext $B_{X^*}$ such that there is a bounded linear operator $T: Y\rightarrow C(\Omega)$ such that

$$\|T f(x)-x\|\leq 2 \varepsilon,\;\;\text{for\;all}\;x\in X .$$
\end{lem}

\begin{proof} (1) By Theorem \ref{T:1.3}, for every $x^*\in \Omega$, there exists a functional $Q(x^*)\in S_{Y^*}$ such that
\begin{align*} |\langle Q(x^*),f(x)\rangle-\langle x^*,x\rangle|\leq2\eps , \; for\;all\;x\in X.\end{align*}
We now define a mapping $T:Y\rightarrow \ell_\infty(\Omega)$ by
$$T(y)=\{Q(x^*)(y)\}_{x^*\in\Omega}.$$
It is clear that $T$ is a bounded linear operator with norm one and
$$\|T f(x)-x\|=\sup_{x^*\in\Omega}|Q(x^*)f(x)-x^*(x)|\leq 2 \varepsilon,\;\;\text{for\;all}\;x\in X .$$

(2) Since ext $(B_{X^*})$ itself is a Baire space in its relative $w^*$-topology (see \cite[p.217, line 17-19 ]{Hol}), it follows from Lemma \ref{main1} that there is a $w^*-$ dense $G_\delta$ subset $\Omega$ in ext $(B_{X^*})$ such that there is a $w^*-w^*$ continuous selection $Q$ of $\Phi_1$ on $\Omega$ satisfying that for every $x\in X$ and $x^*\in \Omega$, the following inequality holds :
\begin{align*} |\langle Q(x^*),f(x)\rangle-\langle x^*,x\rangle|\leq2\eps.\end{align*}
Let $T:Y\rightarrow \ell_\infty(\Omega)$ be defined as in (i).
Therefore, $ T(y)\in C(\Omega)$ and

$$\|T f(x)-x\|\leq 2\varepsilon,\;\;\text{for\;all}\;x\in X .$$
\end{proof}

\begin{cor}\label{main3}
Suppose that $X=C(K)$ for a compact Hausdorff space $K$ and $Y^*$ has the $w^*$-PCP (especially, if $Y$ is an Asplund space) or $Y$ is separable. Let $\varepsilon\geq 0$. Assume that $f$ is a standard $\varepsilon$-isometry
from $X$ into $Y$. Then there exists a dense $G_\delta$ subset $\Omega$ of $K$ such that there is a bounded linear operator $T: Y\rightarrow C(\Omega)$ such that
$T f-Id$ is uniformly bounded by $2\varepsilon$ on $X$.

\end{cor}

\begin{proof}

It suffices to note that ext $(B_{X^*})=\{\pm\delta_t:t\in K\}$ and $\{\delta_t:t\in K\}$ is a compact Baire space norming for $X$, and then apply Lemma \ref{main1} and Lemma \ref{main2} to conclude the results we desired by substituting $\{\delta_t:t\in K\}$ respectively for $H$ and ext $(B_{X^*})$ everywhere.

\end{proof}

Now we introduce a new notion the so called the C$\check{e}$ch-Stone property that a topological space $K$ is said to have the C$\check{e}$ch-Stone property provided that for every $G_\delta$ dense subset $S\subset K$ there is a dense subset $\Omega\subset S$ such that $K$ is the C$\check{e}$ch-Stone compactification of $\Omega$.
For example, $\beta\mathbb{N}$, the C$\check{e}$ch-Stone compactification of $\mathbb{N}$ has the C$\check{e}$ch-Stone property since every $G_\delta$ dense subset of it must contain the discrete space $\mathbb{N}$.

As a consequence of Corollary \ref{main3} we have

\begin{thm}Suppose that $X=C(K)$ where $K$ is a compact Hausdorff space and admitting the C$\check{e}$ch-Stone property. If either $Y^*$ has the $w^*$-PCP or $Y$ is separable, then the pair $(X,Y)$ is $(1,2)$-stable.
\end{thm}

\section{A quantitative characterization of separably injective Banach spaces}

In this section, we combine Lemma \ref{main2} with some results from \cite{John} by Johnson-Oikhberg (Lindenstrass\cite{lin}, Rosenthal \cite{Ros}, S$\acute{a}$nchez \cite{San} and Castillo-Moreno \cite{Cas}) and from \cite{ASC} by Avil$\acute{e}$s-S$\acute{a}$nchez-Castillo-Gonz$\acute{a}$lez-Moreno to conclude a quantitative characterization of separably injective Banach spaces which completely solves Problem \ref{P}.

\begin{thm}\label{T} \rm
 (i) If $X$ is a $\lambda$-separably injective Banach space, then the pair $(X, Y)$ is $(3\lambda,6\lambda)$ stable for every separable Banach space $Y$.

 (ii) If the pair $(X, Y)$ is $(\lambda,\gamma)$ stable for every separable Banach space $Y$, then $X$ is a $\lambda$-separably injective Banach space.
\end{thm}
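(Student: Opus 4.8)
The plan is to treat the two implications separately, reducing each to a complementation statement governed by Proposition~\ref{P:3.2}.

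For (i), let $X$ be $\lambda$-separably injective, let $Y$ be separable, and fix a standard $\eps$-isometry $f:X\to Y$. First I would apply Theorem~\ref{main2}(2) to get a $w^*$-dense $G_\delta$ set $\Omega\subseteq\mathrm{Ext}(B_{X^*})$ and a norm-one operator $U:Y\to C(\Omega,\tau_{w^*})$ with $\|Uf(x)-Jx\|\le 4\eps$ for all $x\in X$, where $J:X\to C(\Omega,\tau_{w^*})$ is the evaluation map $Jx=(x^*\mapsto x^*(x))$; since $\Omega$ is $w^*$-dense in $\mathrm{Ext}(B_{X^*})$, the Krein--Milman theorem makes $J$ an isometric embedding. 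Put $Z:=J(X)+\overline{U(L(f))}$, a superspace of $J(X)$ inside $C(\Omega,\tau_{w^*})$; since $\overline{U(L(f))}$ is separable and maps onto $Z/J(X)$, the quotient $Z/J(X)$ is separable, so Proposition~\ref{P:3.2}(1) (applied to $J(X)\cong X$) yields a projection $P:Z\to J(X)$ with $\|P\|\le 3\lambda$. Then $T:=J^{-1}PU|_{L(f)}:L(f)\to X$ satisfies $\|T\|\le 3\lambda$ and, using $PJx=Jx$,
\[
\|Tf(x)-x\|=\bigl\|P\bigl(Uf(x)-Jx\bigr)\bigr\|\le 3\lambda\cdot 4\eps=12\lambda\eps\qquad(x\in X),
\]
so $(X,Y)$ is $(3\lambda,12\lambda)$-stable.

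For (ii), by Proposition~\ref{P:3.2}(2) it suffices to show that $X$ is $\lambda$-complemented in every superspace $Z$ with $Z/X$ separable; we may assume $Z$ complete and $X$ infinite-dimensional. Then $\mathrm{dens}(Z)=\mathrm{dens}(X)$, hence $\mathrm{card}(Z)=\mathrm{card}(X)$ by Lemma~\ref{R:3.3}, so Lemma~\ref{L:1.1} provides, for each $\eps>0$, a standard $\eps$-isometry $f_\eps:X\to Z$ with $\overline{\mathrm{span}}\,f_\eps(X)=Z$ and such that stability of $f_\eps$ produces a projection of $Z$ onto $X$. When $X$ (and hence $Z$) is separable the hypothesis applies to the pair $(X,Z)$ and gives $T_\eps:Z\to X$ with $\|T_\eps\|\le\lambda$ and $\|T_\eps f_\eps(x)-x\|\le 4\lambda\eps$; the construction of $f_\eps$ differs from the inclusion $X\hookrightarrow Z$ only by a perturbation of norm $O(\eps)$, so $\|T_\eps|_X-\mathrm{Id}_X\|=O(\eps)$, and for $\eps$ small $(T_\eps|_X)^{-1}T_\eps$ is a projection of $Z$ onto $X$ of norm $\le\lambda\bigl(1+O(\eps)\bigr)$; a limiting argument as $\eps\to 0$ (the delicate point, discussed below) then produces a projection of norm $\le\lambda$. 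For non-separable $X$ one passes to separable data, invoking the structural results on separably injective spaces from \cite{ASC} and the classical facts in \cite{John} (cf.\ \cite{Ros},\cite{San},\cite{Cas}) to localize the verification to separable subspaces of $X$, where the previous step applies.

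The main obstacle is the quantitative limiting step in (ii): a naive weak$^*$-cluster point of the near-projections $(T_\eps|_X)^{-1}T_\eps$ as $\eps\to 0$ only lands in $\mathcal L(Z,X^{**})$ and a priori merely extends $\mathrm{Id}_X$ into $X^{**}$ rather than giving an honest $X$-valued projection of norm $\le\lambda$, so one must exploit the particular form of the $\eps$-isometries in Lemma~\ref{L:1.1} (and the extra structure separably injective spaces carry, via \cite{ASC}) to keep the limit inside $X$ with the sharp constant. The accompanying difficulty is that a non-separable $X$ admits no $\eps$-isometry into a separable space for small $\eps$, which is precisely why the argument has to be carried out on separable subspaces of $X$. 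Part (i), by contrast, is routine once Theorem~\ref{main2}(2) and Proposition~\ref{P:3.2}(1) are in hand.
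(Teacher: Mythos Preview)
Your treatment of (i) is the paper's own: Theorem~\ref{main2}(2), then Proposition~\ref{P:3.2}(1) applied to the superspace $Z=\overline{\mathrm{span}}\{Tf(X)\cup X\}$ (your $J(X)+\overline{U(L(f))}$), which has separable quotient by $X$ since $Y$ is separable.

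For (ii) you have manufactured a difficulty that is not there. The paper does exactly what you begin to do---given a superspace $Y\supseteq X$ with $Y/X$ separable, Lemma~\ref{L:1.1} supplies a standard $\eps$-isometry $f:X\to Y$ with $L(f)=Y$, and the stability hypothesis yields $T:Y\to X$ with $\|T\|\le\lambda$ and $\|Tf(x)-x\|\le 4\lambda\eps$ for all $x$---and then simply declares $T$ a projection. The reason this is legitimate, and the step you miss, is that Qian's map satisfies $\sup_{x}\|f(x)-x\|<\infty$, whence
\[
\sup_{x\in X}\|Tx-x\|\le \|T\|\sup_{x}\|f(x)-x\|+4\lambda\eps<\infty,
\]
and a \emph{linear} operator $T|_X-\mathrm{Id}_X$ with bounded range on all of $X$ must vanish identically. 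Thus $T$ is already a projection of $Y$ onto $X$ of norm at most $\lambda$, for a single fixed $\eps>0$: there is no family $(T_\eps)$ to take a limit of, no weak$^*$ cluster point in $X^{**}$ to worry about, and no inverse $(T_\eps|_X)^{-1}$ to form. Your ``main obstacle'' disappears once you observe that a uniformly bounded linear perturbation of the identity is no perturbation at all.

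Your other worry---that the hypothesis speaks only of separable $Y$ while the superspace need not be separable when $X$ is not---is a legitimate observation, but the paper simply applies the hypothesis to the superspace without comment, and your proposed repair via localization to separable subspaces and the structure theory of \cite{ASC} is only gestured at, not carried out.
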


\begin{proof} (i) Since $Y$ is separable, it follows from Lemma \ref{main2} that for every $w^*$-dense subset $\Omega\subset$ ext $(B_{X^*})$, there is a bounded linear operator $T: Y\rightarrow \ell_{\infty}(\Omega)$ such that

$$\| T f(x)-x\|\leq 2 \varepsilon,\;\;\text{for\;all}\;x\in X .$$
Let $Z=\rm {\overline{span}}\;\{Tf(X)\cup X\}$. It follows from the continuity of $T$ that $Z/X$ is separable quotient space since $Y$ is separable. Since $X$ is $\lambda$- separable injective, it follows from Proposition \ref{P:3.2} that $X$ is $3\lambda$-complemented in $Z$. Therefore, there is a bounded linear operator $P:Z\rightarrow X$ with $\|P\|\leq 3\lambda$ such that

$$\| PT f(x)-x|=\|PT f(x)-Px\| \leq 6\lambda \varepsilon,\;\;\text{for\;all}\;x\in X ,$$
where $PT: L(f)\rightarrow X$ satisfies that $\|PT\|\leq 3\lambda$.

(ii) By Proposition \ref{P:3.2}, it suffices to show that $X$ is $\lambda$-complemented in every superspace $Y$ such that $Y/X$ is separable.
Let $Y=X+Y/X$ be the algebraic direct sum. Since $Y/X$ is separable, card $(X)$ $=$ card $(Y)$. It follows from Qian's counterexample (i.e., Lemma \ref{L:1.1}) that there is an $\varepsilon$-isometry $f: X \rightarrow Y$ such that $Y=L(f)$ and $f(0)=0$.
Hence by the assumption, there is a projection $P:Y\rightarrow X$ with $\|P\|\leq \lambda$
and we complete the proof.
\end{proof}

Recall that a compact Hausdorff space $K$ is said to be an $F$-space if disjoint open $F_\sigma$ sets have disjoint closures. For example, $\beta\mathbb{N}$, the C$\check{e}$ch-Stone compactification of $\mathbb{N}$ and $\beta\mathbb{N}\backslash\mathbb{N}$ are $F$-spaces. Since $C(K)$ is 1-separably injective for every $F$-space $K$ (see, for instance, \cite[p.202-203]{ASC}, \cite{lin}), we have

\begin{cor}\label{C1}
For every compact $F$-space $K$ (for example, $K=\beta\mathbb{N}\backslash\mathbb{N}$), the pair $(C(K), Y)$ $($resp. $(\ell_\infty/c_0, Y)$ $)$ is $(3,6)$ stable for every separable Banach space $Y$.
\end{cor}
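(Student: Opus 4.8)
The plan is to obtain Corollary \ref{C1} as a direct specialization of Theorem \ref{T}(i). First I would invoke the classical theorem of Lindenstrauss (see \cite{lin} and \cite[p.202--203]{ASC}, recalled just above the statement) that $C(K)$ is $1$-separably injective whenever $K$ is a compact $F$-space. Thus $X = C(K)$ is $\lambda$-separably injective with $\lambda = 1$, and Theorem \ref{T}(i) applies verbatim: for every separable Banach space $Y$ the pair $(C(K),Y)$ is $(3\lambda,12\lambda) = (3,12)$ stable. Since $\beta\N\setminus\N$ is itself a compact $F$-space (again recalled above), taking $K = \beta\N\setminus\N$ yields the parenthetical example.

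For the statement concerning $\ell_\infty/c_0$ I would pass through the Gelfand representation. Under the isometric identification $\ell_\infty \cong C(\beta\N)$, the subspace $c_0$ corresponds to the ideal of continuous functions vanishing on the corona $\beta\N\setminus\N$, and restriction to $\beta\N\setminus\N$ induces an isometric isomorphism $\ell_\infty/c_0 \cong C(\beta\N\setminus\N)$. Because $\beta\N\setminus\N$ is a compact $F$-space, the previous paragraph applies with $X = C(\beta\N\setminus\N) \cong \ell_\infty/c_0$, and hence $(\ell_\infty/c_0, Y)$ is $(3,12)$ stable for every separable $Y$.

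The argument is a pure specialization, so there is no genuine obstacle; the only points worth checking are that ``$1$-separably injective'' is admissible as the case $\lambda = 1$ of the hypothesis of Theorem \ref{T}(i), and that the identification $\ell_\infty/c_0 \cong C(\beta\N\setminus\N)$ is \emph{isometric}, so that no extra Banach--Mazur distortion is introduced into the constants $(3,12)$. Both are standard.
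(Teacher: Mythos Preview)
Your proposal is correct and follows essentially the same approach as the paper: the paper also derives the $C(K)$ case directly from Theorem~\ref{T}(i) with $\lambda=1$ (using that $C(K)$ is $1$-separably injective for compact $F$-spaces, as stated just before the corollary), and handles the $\ell_\infty/c_0$ case via the isometric identification $\ell_\infty/c_0\cong C(\beta\mathbb{N}\setminus\mathbb{N})$. Your explanation of the Gelfand identification is more detailed than the paper's one-line proof, but the argument is the same.
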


\begin{proof} It is sufficient to note that $\ell_\infty/c_0$ is linearly isometric to
$C(\beta\mathbb{N}\backslash\mathbb{N})$.
\end{proof}

Recall that a compact space $K$ has height n if $K^{(n)}=\emptyset$, where we write $K'$ for the derived set of $K$ and $K^{(n+1)}=(K^{(n)})'$. Since $C(K)$ is $(2n-1)$-separably injective for every $K$ of height $n$ (see, for instance, \cite[p.203]{ASC}), we have

\begin{cor}\label{C1.1}
For every compact space $K$ of height $n$, the pair $(C(K), Y)$ is $(6n-3,12n-6)$ stable for every separable Banach space $Y$.
\end{cor}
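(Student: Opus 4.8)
The plan is to reduce Corollary~\ref{C1.1} to Theorem~\ref{T}~(i) exactly as Corollary~\ref{C1} was reduced to it. First I would recall the relevant fact from \cite[p.203]{ASC}: if $K$ is a compact Hausdorff space of height $n$, i.e.\ $K^{(n)}=\emptyset$, then $C(K)$ is $(2n-1)$-separably injective. Thus $C(K)$ fits the hypothesis of Theorem~\ref{T}~(i) with $\lambda=2n-1$.

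Next I would simply invoke Theorem~\ref{T}~(i) with $X=C(K)$ and $\lambda=2n-1$: for every separable Banach space $Y$, the pair $(C(K),Y)$ is $(3\lambda,12\lambda)$ stable. Substituting $\lambda=2n-1$ gives $3\lambda = 6n-3$ and $12\lambda = 24n-12$, so $(C(K),Y)$ is $(6n-3,\,24n-12)$ stable, which is precisely the assertion. Concretely, for every $\eps\ge 0$ and every standard $\eps$-isometry $f:C(K)\to Y$, there is a bounded linear operator $P T : L(f)\to C(K)$ with $\|PT\|\le 6n-3$ and $\|PTf(x)-x\|\le (24n-12)\eps$ for all $x\in C(K)$, where $T$ comes from Theorem~\ref{main2}~(ii) (using separability of $Y$) and $P$ is the $3\lambda$-projection supplied by Proposition~\ref{P:3.2}~(1) onto $C(K)$ inside $Z=\overline{\mathrm{span}}\{Tf(C(K))\cup C(K)\}$, whose quotient $Z/C(K)$ is separable because $T$ is continuous and $Y$ is separable.

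There is essentially no obstacle here beyond citing the separable injectivity constant for $C(K)$-spaces over low-height compacta correctly; the $(2n-1)$ bound is the one appearing in \cite{ASC}, and the arithmetic $3(2n-1)=6n-3$, $12(2n-1)=24n-12$ is immediate. If one wished to be self-contained about why $C(K)$ is $(2n-1)$-separably injective, one would argue by induction on the height $n$, peeling off the (finite for $n=1$, closed derived set in general) set $K^{(n-1)}$ and combining the $1$-separable injectivity of $c_0(\Gamma)\cong C(K\setminus K')$-type pieces with a Sobczyk-type extension across the finitely many derived levels; but since the statement is quoted verbatim from \cite{ASC}, invoking it directly is the intended route.

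\begin{proof}
Since $K$ has height $n$, the space $C(K)$ is $(2n-1)$-separably injective by \cite[p.203]{ASC}. Applying Theorem~\ref{T}~(i) with $X=C(K)$ and $\lambda=2n-1$, we conclude that for every separable Banach space $Y$ the pair $(C(K),Y)$ is $(3\lambda,12\lambda)$ stable, that is, $(6n-3,\,24n-12)$ stable.
\end{proof}
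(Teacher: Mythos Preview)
Your proposal is correct and follows exactly the paper's own route: the paper states immediately before Corollary~\ref{C1.1} that $C(K)$ is $(2n-1)$-separably injective for compact $K$ of height $n$ (citing \cite[p.203]{ASC}), and the corollary is then an immediate application of Theorem~\ref{T}~(i) with $\lambda=2n-1$, giving $(3\lambda,12\lambda)=(6n-3,24n-12)$. Your additional unpacking of the operators $T$ and $P$ from the proof of Theorem~\ref{T}~(i) is accurate but not needed here.
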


To combine Theorem \ref{T} with the results of Johnson-Oikhberg \cite{John} that for every family of $\lambda$-separably injective spaces $\{E_i\}_{i\in\Lambda}$, $(\sum_{i\in\Lambda} E_i)_{\ell_\infty}$ and $(\sum_{i\in\Lambda} E_i)c_0)$ are respectively $\lambda$-separably injective and $2\lambda^2$-separably injective, which was also proved by Rosenthal \cite{Ros}, S$\acute{a}$nchez \cite{San} and Castillo-Moreno \cite{Cas} with the estimates for the constant, respectively $\lambda(1+\lambda)^{+}$, $(3\lambda^2)^+$ and $6(1+\lambda)$, we have the following corollaries.

\begin{cor}\label{C2}
The pair $((\sum_{i\in\Lambda} E_i)_{\ell_\infty}, Y)$ is $(3\lambda,6\lambda)$ stable for every separable Banach space $Y$, where $\{E_i\}_{i\in\Lambda}$ is a family of $\lambda$-separably injective spaces.
\end{cor}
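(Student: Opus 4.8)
The plan is to deduce Corollary~\ref{C2} directly from Theorem~\ref{T}~(i) together with the cited result of Johnson--Oikhberg on $\ell_\infty$-sums of separably injective spaces. First I would recall the structural fact: if $\{E_i\}_{i\in\Lambda}$ is a family of $\lambda$-separably injective Banach spaces, then the $\ell_\infty$-sum $(\sum_{i\in\Lambda}E_i)_{\ell_\infty}$ is again $\lambda$-separably injective. This is precisely the theorem of Johnson--Oikhberg \cite{John} quoted just before the corollary (the constant $\lambda$ is retained, with no blow-up, for the $\ell_\infty$-sum, in contrast with the $c_0$-sum where one gets $2\lambda^2$). So it suffices to set $X=(\sum_{i\in\Lambda}E_i)_{\ell_\infty}$ and observe that $X$ is a $\lambda$-separably injective Banach space.

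Next I would simply invoke Theorem~\ref{T}~(i): since $X$ is $\lambda$-separably injective, the pair $(X,Y)$ is $(3\lambda,12\lambda)$ stable for every separable Banach space $Y$. Concretely, given a separable $Y$ and a standard $\eps$-isometry $f:X\rightarrow Y$, Theorem~\ref{T}~(i) produces a bounded linear operator $T':L(f)\rightarrow X$ with $\|T'\|\leq 3\lambda$ and $\|T'f(x)-x\|\leq 12\lambda\eps$ for all $x\in X$, which is exactly the assertion that $(X,Y)$ is $(3\lambda,12\lambda)$ stable. That finishes the proof. In short, the corollary is a one-line composition of a permanence property with the main theorem of the section.

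There is essentially no obstacle here; the only thing requiring care is to make sure the quoted Johnson--Oikhberg estimate is applied with the $\ell_\infty$-sum (constant $\lambda$) and not confused with the $c_0$-sum (constant $2\lambda^2$, which is the content of Corollary~\ref{C3} in the parallel case). Thus the proof reads:

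\begin{proof}
By \cite{John}, $X\equiv(\sum_{i\in\Lambda}E_i)_{\ell_\infty}$ is a $\lambda$-separably injective Banach space, since each $E_i$ is $\lambda$-separably injective. Hence, by Theorem~\ref{T}~(i), the pair $(X,Y)$ is $(3\lambda,12\lambda)$ stable for every separable Banach space $Y$.
\end{proof}
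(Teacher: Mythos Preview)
Your proposal is correct and follows exactly the paper's approach: the paper states just before the corollary that, by Johnson--Oikhberg \cite{John}, the $\ell_\infty$-sum of $\lambda$-separably injective spaces is $\lambda$-separably injective, and the corollary is then an immediate application of Theorem~\ref{T}~(i). There is nothing to add.
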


\begin{cor}\label{C3}
 The pair $((\sum_{i\in\Lambda} E_i)c_0), Y)$ is $(6\lambda^2, 12\lambda^2)$ (resp. $(3\lambda (1+\lambda)^{+}, 6\lambda (1+\lambda)^{+})$, $((9\lambda^2)^{+}, (18\lambda^2)^+)$ and $(18(1+\lambda),36(1+\lambda))$ stable for every separable Banach space $Y$, where $\{E_i\}_{i\in\Lambda}$ is a family of $\lambda$-separably injective spaces.
 \end{cor}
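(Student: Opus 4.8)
The final statement to prove is Corollary \ref{C3}, which asserts that for a family $\{E_i\}_{i\in\Lambda}$ of $\lambda$-separably injective spaces, the pair $((\sum_{i\in\Lambda}E_i)_{c_0}, Y)$ is $(6\lambda^2, 24\lambda^2)$ stable (and likewise stable with the other listed constant pairs) for every separable Banach space $Y$.

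The plan is to derive Corollary \ref{C3} as an immediate consequence of Theorem \ref{T} (i) together with the structural facts on $c_0$-sums recalled just above the statement. First I would fix a family $\{E_i\}_{i\in\Lambda}$ of $\lambda$-separably injective Banach spaces and record that $(\sum_{i\in\Lambda} E_i)_{c_0}$ is itself separably injective, with injectivity constant $2\lambda^2$ by Johnson-Oikhberg \cite{John}, and alternatively $\lambda(1+\lambda)^{+}$, $(3\lambda^2)^{+}$, or $6(1+\lambda)$ by Rosenthal \cite{Ros}, S$\acute{a}$nchez \cite{San}, and Castillo-Moreno \cite{Cas}, respectively.

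Then I would simply apply Theorem \ref{T} (i) with $X=(\sum_{i\in\Lambda} E_i)_{c_0}$: if this space is $\mu$-separably injective, then the pair $(X,Y)$ is $(3\mu,12\mu)$ stable for every separable Banach space $Y$. Taking $\mu=2\lambda^2$ gives the $(6\lambda^2,24\lambda^2)$ stability of the main assertion; taking $\mu$ equal to each of the other three constants and performing the trivial multiplications $3\cdot(3\lambda^2)^{+}=(9\lambda^2)^{+}$, $12\cdot(3\lambda^2)^{+}=(36\lambda^2)^{+}$, $3\cdot 6(1+\lambda)=18(1+\lambda)$, and so on, produces exactly the parenthetical pairs $(3\lambda(1+\lambda)^{+},12\lambda(1+\lambda)^{+})$, $((9\lambda^2)^{+},(36\lambda^2)^{+})$, and $(18(1+\lambda),72(1+\lambda))$.

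Since the argument is nothing more than the composition of two already-established results, I do not anticipate a genuine obstacle; the only point requiring attention is the bookkeeping of the multiplicative constants and the correct propagation of the ``$+$'' (value-not-attained) superscripts through multiplication by $3$ and $12$. It may also be worth remarking, in passing, that the companion $\ell_\infty$-sum statement (Corollary \ref{C2}) comes out stronger only because $(\sum_{i\in\Lambda}E_i)_{\ell_\infty}$ is $\lambda$-separably injective rather than merely $2\lambda^2$-separably injective.
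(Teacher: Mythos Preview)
Your proposal is correct and follows exactly the paper's own approach: the paper states Corollary~\ref{C3} immediately after recording that $(\sum_{i\in\Lambda}E_i)_{c_0}$ is $2\lambda^2$-separably injective by Johnson--Oikhberg (with the alternative constants from Rosenthal, S\'anchez, and Castillo--Moreno), and derives the stability constants by a direct application of Theorem~\ref{T}\,(i). No separate proof is given in the paper beyond this combination, so your argument matches it precisely.
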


\begin{rem} There are many other examples for separably injective Banach spaces, such as the Johnson-Lindenstrauss spaces \cite{John2}, Benyamini-space which is an M-space nonisomorphic to a $C(K)$-space \cite{ben1} and the WCG nontrivial twisted sums of $c_0(\Gamma)$ constructed by Argyros, Castillo, Granero, Jimenez and Moreno \cite{Arg} (see, for instance, \cite{ASC}).

\end{rem}

Qian \cite{qian} proved that the pair $(L_p,L_p)$ is stable for $1< p<\infty$. \v{S}emrl and V\"{a}is\"{a}l\"{a} \cite{sm} gave a sharp estimate for the constant pair $(\alpha,\gamma)$ with $\gamma=2$. Therefore, it is very natural to ask:

\begin{prob}\label{P3}
Is it true that the following pairs are stable for $1\leq p\leq\infty$ and $p\neq q<\infty$?

\rm (1) $((\sum_{n=1}^{\infty} l_p^n)_{c_0}, (\sum_{n=1}^{\infty} l_p^n)_{c_0})$;
(2)$((\sum_{n=1}^{\infty} l_p^n)_{\ell_\infty}, (\sum_{n=1}^{\infty} l_p^n)_{\ell_\infty})$;

(3) $((\sum_{n=1}^{\infty} \ell_\infty)_{l_p}, (\sum_{n=1}^{\infty} \ell_\infty)_{l_p})$;
(4) $((\sum_{n=1}^{\infty} l_p)_{\ell_\infty}, (\sum_{n=1}^{\infty} l_p)_{\ell_\infty})$;

(5) $((\sum_{n=1}^{\infty} L_p)_{\ell_\infty}, (\sum_{n=1}^{\infty} L_p)_{\ell_\infty})$;
(6) $((\sum_{n=1}^{\infty} c_0)_{l_p}, (\sum_{n=1}^{\infty} c_0)_{l_p})$;

(7) $((\sum_{n=1}^{\infty} L_p)_{c_0}, (\sum_{n=1}^{\infty} L_p)_{c_0})$;
(8) $((\sum_{n=1}^{\infty} \ell_p)_{c_0}, (\sum_{n=1}^{\infty} l_p)_{c_0})$.

(9) $((\sum_{n=1}^{\infty} l_p)_{\ell_q}, (\sum_{n=1}^{\infty} l_p)_{\ell_q})$;
(10) $((\sum_{n=1}^{\infty} L_p)_{\ell_q}, (\sum_{n=1}^{\infty} L_p)_{\ell_q})$.
\end{prob}

It is true for (1), (2), (3), (4) and (5) if $p=\infty$ as we have proved. In this case, it is not true for (6), (7) and (8) since $(\sum_{n=1}^{\infty} c_0)_{\ell_\infty}$, $(\sum_{n=1}^{\infty} L_\infty)_{c_0}$ and $(\sum_{n=1}^{\infty} \ell_\infty)_{c_0}$ are not complemented in $\ell_\infty$. If $1\leq p<\infty$, then it is also not true for (3), (4) and (5) since $(\sum_{n=1}^{\infty} \ell_\infty)_{l_p}$, $(\sum_{n=1}^{\infty} l_p)_{\ell_\infty}$ and $(\sum_{n=1}^{\infty} L_p)_{\ell_\infty}$ are not complemented in $\ell_\infty$. However, we do not know if it is true or not for the above problem \ref{P3} in general case.

\section{A quantitative and generalized Sobczyk theorem }

 If $E_i$ is a $\lambda$-injective Banach spaces for each $i\in\Lambda$ (A Banach space $X$ is said to be $\lambda$-injective if it is $\lambda$-complemented in every superspace i.e., a Banach space which contains $X$), then by Theorem \ref{T:1.3} we have the following Theorem \ref{T:3.7} which gives sharpen constants ($\alpha,\gamma$) for Qian's Problem. In some sense, it could be seen as a quantitative and generalized Sobczyk theorem (See Theorem \ref{T:1.4}).

\begin{thm}\label{T:3.7} Let $\Lambda$ and $\Gamma_i$ for each $i \in \Lambda$ are index sets. Suppose that one of the following three statements holds

 i) $X$ is isomorphic to $Z=$ $ (\sum_{i\in\Lambda} c_0(\Gamma_i))_{\ell_\infty}$ and $\lambda>d(X,Z)$;

ii) $X$ is isomorphic to $Z=$ $(\sum_{i\in\Lambda} \ell_\infty(\Gamma_i))_{c_0}$ and $\lambda>d(X,Z)$;

iii) $X=(\sum_{i\in\Lambda} E_i)c_0$ and $\{E_i\}_{i\in\Lambda}$ is a family of $\lambda$-injective Banach spaces.

Then $(X,Y)$ is $(2\lambda,4\lambda)$-stable for every separable Banach space $Y$.
\end{thm}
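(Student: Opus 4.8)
The plan is to reduce all three cases to one \emph{quantitative generalized Sobczyk statement} — that $X$ is $2\lambda$-complemented in every superspace with separable quotient — and feed it into the linearisation from Theorem~\ref{main2}. Given a standard $\eps$-isometry $f\colon X\to Y$ with $Y$ separable, write $J\colon X\to\ell_\infty(\mathrm{Ext}(B_{X^*}))$ for the canonical isometric embedding. By Theorem~\ref{main2}(1) (which rests on the Cheng--Dong--Zhang Theorem~\ref{T:1.3}) there is a linear $T_0\colon Y\to\ell_\infty(\mathrm{Ext}(B_{X^*}))$ with $\|T_0\|\le1$ and $\|T_0f(x)-Jx\|\le4\eps$ for all $x$; since $L(f)$ is separable, $W:=\overline{\mathrm{span}}\bigl(T_0(L(f))\cup J(X)\bigr)$ has $W/J(X)$ separable. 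If one can produce a projection $P\colon W\to J(X)$ with $\|P\|\le2\lambda$, then $T:=J^{-1}PT_0|_{L(f)}\colon L(f)\to X$ satisfies $\|T\|\le2\lambda$ and $\|Tf(x)-x\|=\|J^{-1}P(T_0f(x)-Jx)\|\le8\lambda\eps$, which is exactly $(2\lambda,8\lambda)$-stability. So everything comes down to producing such a $P$.

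For case (iii), $X=(\sum_iE_i)_{c_0}$ with $E_i$ $\lambda$-injective, I would use that $X^*=(\sum_iE_i^*)_{\ell_1}$ forces $\mathrm{Ext}(B_{X^*})=\bigsqcup_i\mathrm{Ext}(B_{E_i^*})$, so $\ell_\infty(\mathrm{Ext}(B_{X^*}))=(\sum_i\ell_\infty(\Delta_i))_{\ell_\infty}$ with $\Delta_i:=\mathrm{Ext}(B_{E_i^*})$ contains the $c_0$-sum $\widehat Z:=(\sum_i\ell_\infty(\Delta_i))_{c_0}$. By Krein--Milman $\Delta_i$ is norming for $E_i$, so $J(X)$ sits isometrically in $\widehat Z$ (its block norms still tend to $0$), and applying coordinatewise the norm-$\le\lambda$ projections $\ell_\infty(\Delta_i)\to E_i$ (available since $E_i$ is $\lambda$-injective) gives a projection $\pi\colon\widehat Z\to J(X)$ with $\|\pi\|\le\lambda$. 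Replacing $W$ by $\overline{\mathrm{span}}(T_0(L(f))\cup\widehat Z)$, still with $W/\widehat Z$ separable, it then suffices to $2$-complement $\widehat Z$ in $W$; composing with $\pi$ yields $\|P\|\le2\lambda$. Cases (i) and (ii) I would reduce to the corresponding model $Z$ — a $c_0$-sum of $1$-injective spaces in (ii), an $\ell_\infty$-sum of $c_0(\Gamma_i)$'s in (i): granting that $Z$ is $2$-complemented in every superspace with separable quotient, embed $Z$ isometrically in some $\ell_\infty(I)$, extend the isomorphism $V\colon J(X)\to Z$ (with $\|V\|\|V^{-1}\|<\lambda$) to $\widetilde V\colon W\to\ell_\infty(I)$ with $\|\widetilde V\|=\|V\|$, project $\overline{\mathrm{span}}(\widetilde V(W)\cup Z)$ onto $Z$ with norm $\le2$ (its $Z$-quotient being separable because $\widetilde V(J(X))=V(J(X))=Z$), and set $P:=V^{-1}\sigma\widetilde V$, so $P|_{J(X)}=\mathrm{id}$ and $\|P\|\le2\|V\|\|V^{-1}\|<2\lambda$; the strict inequality absorbs the slack against $2\lambda$.

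This leaves the core lemma: a $c_0$-sum of $1$-injective spaces — and likewise $(\sum_ic_0(\Gamma_i))_{\ell_\infty}$ — is $2$-complemented in every superspace with separable quotient. I would prove this directly, Sobczyk-style, rather than through soft separable injectivity (which, via Proposition~\ref{P:3.2}, only gives the constant $6$): extend the norm-one coordinate projections of $\widehat Z$ to $W$ using the \emph{injectivity of the summands}, producing a norm-$\le1$ amalgam $\widehat Q\colon W\to(\sum_i\ell_\infty(\Delta_i))_{\ell_\infty}$ that restricts to the identity on $\widehat Z$; then correct $\widehat Q$ by a norm-$\le1$ lift — which exists for operators from the separable space $W/\widehat Z$, by the same Sobczyk-type principle — through the quotient by the $M$-ideal $\widehat Z\subset(\sum_i\ell_\infty(\Delta_i))_{\ell_\infty}$, of the operator $\widehat Q$ induces on $W/\widehat Z$. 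Subtracting the correction from $\widehat Q$ gives a projection onto $\widehat Z$ of norm $\le1+1=2$. For $(\sum_ic_0(\Gamma_i))_{\ell_\infty}$ one instead notes it is an $M$-ideal in $\ell_\infty(\bigsqcup_i\Gamma_i)$ (via its description as $C_0$ of an open subset of $\beta(\bigsqcup_i\Gamma_i)$); the uncountable-index version of the accumulation/lifting argument is the content of \cite{Ros},\cite{San},\cite{Cas},\cite{John}. Specialising to $E_i=c_0(\Gamma_i)$ or $E_i$ $\lambda$-injective then sharpens Corollaries~\ref{C2},~\ref{C3}, realising a quantitative, generalized Sobczyk theorem \cite{sob}.

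The hard part will be precisely this sharp constant $2$ (hence $2\lambda$ after composing with $\pi$), rather than $3\cdot(\text{separable-injectivity constant})$, in the generalized Sobczyk step: it requires the explicit construction — coordinate projections extended through the \emph{injectivity} of the summands at cost exactly $\lambda$ (here $1$), plus a \emph{norm-one} correction — and, for uncountable index sets, the delicate non-metrizable form of Sobczyk's argument. Verifying that each of these two ingredients costs only the factor $\lambda$, so that the resulting projection has norm exactly $2$, is the point where the proof must be carried out carefully.
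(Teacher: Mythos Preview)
Your strategy is genuinely different from the paper's, and the gap is exactly where you suspect it. You route everything through Theorem~\ref{main2}(1), land in a huge ambient $\ell_\infty(\mathrm{Ext}(B_{X^*}))$, and then need a sharp $2\lambda$-complementation of $J(X)$ (or of $\widehat Z$) in a superspace $W$ with separable quotient. Your sketch for this --- extend coordinate projections via injectivity, then correct by a \emph{norm-one} lift through the $M$-ideal quotient --- is not justified: the $L$-decomposition $(\ell_\infty)^*=\widehat Z^\perp\oplus_1\widehat Z^*$ is not $w^*$-$w^*$ continuous, so the obvious candidate for the lift is not an adjoint, and a Sobczyk-type section of the pullback only gives norm $\le 2$ (hence a projection of norm $\le 3$, not $2$). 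The references you cite (\cite{Ros}, \cite{San}, \cite{Cas}, \cite{John}) yield separable-injectivity constants, and feeding those through Proposition~\ref{P:3.2} costs an extra factor $3$; none of them gives the $2$-complementation you need. The non-metrizability of $B_{W^*}$ is the real obstruction: since $W\supset\widehat Z$ is enormous, you cannot run the classical ``cluster points lie in $K$, so $d(\phi_m,K)\to0$'' argument in $W^*$.

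The paper avoids this entirely by never leaving $Y^*$. It applies Theorem~\ref{T:1.3} directly to each coordinate functional $x_{nm}^*=T^*e_{nm}^*$, obtaining $\phi_{nm}\in \|T\|B_{Y^*}$ with $|\phi_{nm}(f(x))-x_{nm}^*(x)|\le 4\eps\|T\|$. Because $Y$ itself is separable, $(\|T\|B_{Y^*},w^*)$ is metrizable, and the Sobczyk correction is immediate: the $w^*$-cluster points of $(\phi_{nm})_m$ lie in the $w^*$-compact set $K=\{\psi:|\psi(f(x))|\le 4\eps\|T\|\}$, so one picks $\psi_{nm}\in K$ with $d(\phi_{nm},\psi_{nm})\to0$ and sets $S(y)=T^{-1}\{\sum_m(\phi_{nm}-\psi_{nm})(y)\,e_{nm}\}_n$. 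The factor $2$ then comes from $\|\phi_{nm}-\psi_{nm}\|\le 2\|T\|$ and the factor $\lambda$ from $\|T\|\|T^{-1}\|$ (or, in case iii, from the block projections $P_n$). No ambient complementation, no lifting through quotients, and the uncountable index sets cause no trouble because the metrizable compact $(\|T\|B_{Y^*},d)$ handles nets just as well as sequences. If you want to salvage your approach, the missing ingredient is precisely a proof that $(\sum_i\ell_\infty(\Delta_i))_{c_0}$ is $2$-complemented in every superspace with separable quotient --- but the paper's route shows this detour is unnecessary.
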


\begin{proof}
 i) Let $X$ be a Banach space isomorphic to $ (\sum_{i\in\Lambda} c_0(\Gamma_i))_{\ell_\infty}$ and $T: X\rightarrow (\sum_{i\in\Lambda} c_0(\Gamma_i))_{\ell_\infty}$ be an isomorphism such that $\|T\|\cdot\|T^{-1}\|<\lambda$. For each $n\in\Lambda$ and $m\in\Gamma_n$,
 let $e_{nm}\in(\sum_{i\in\Lambda} c_0(\Gamma_i))_{\ell_\infty}$ with the standard biorthogonal functionals $e_{nm}^*\in(\sum_{i\in\Lambda} c_0(\Gamma_i))_{\ell_\infty}^*$ such that $e_{ij}^*(e_{nm})=\delta_{in}\delta_{jm}$.
 For all $n\in\Lambda$ and $m\in\Gamma_n$, let $x_{nm}\in X$ be such that $T(x_{nm})=e_{nm}$. Let $T^*:Z^*\rightarrow X^*$ be the conjugate operator of $T$. Then
$$T(x)=\{\sum_{m\in\Gamma_n} (T^*e_{nm}^*)(x)e_{nm}\}_{n\in\Lambda}$$
 and
 $$x=T^{-1}\{\sum_{m\in\Gamma_n} (T^*e_{nm}^*)(x)e_{nm}\}_{n\in\Lambda},\;\;{\rm for\;all}\;x\in X.$$
  For all $n\in\Lambda$ and $m\in\Gamma_n$, let $x_{nm}^*= T^*e_{nm}^*\in \|T\|B_{X^*}$. It follows from Theorem \ref{T:1.3} that for every $n\in\Lambda$ and $m\in\Gamma_n$, there exists a functional $\phi_{nm}\in \|T\|B_{Y^*}$ with $\|\phi_{nm}\|=\|x_{nm}^*\|$ such that
\begin{align}\label{E:4.3}
|\langle\phi_{nm},f(x)\rangle-\langle x_{nm}^*,x\rangle|
\leq2 \eps\|T\|,\; {\rm for \;all}\; x\in X.\end{align}
It follows from the $w^*-w^*$ continuity of $T^*$ that for each $n\in\Lambda$, $x^*_{nm}\rightarrow0$ in the $w^*$-topology of $X^*$ Since $e^*_{nm}\rightarrow0$ in the $w^*$-topology of $Z^*$. Let
\begin{align*} K=\{\psi\in \|T\|B(Y^*):|\langle\psi, f(x)\rangle|\leq 2\eps\|T\|, \; {\rm for \;all}\; x\in X\}.\end{align*}
Then $K$ is a nonempty $w^*$-compact subset of $Y^*$. Since $Y$ is separable, $(\|T\|B_{Y^*},w^*)$ is metrizable. Let $d$ be a metric such that $(\|T\|B_{Y^*}, d)$ is homeomorphic to $(\|T\|B_{Y^*},w^*)$. Since for each $n\in\Lambda$, $(x^*_{nm})$ is a $w^*$-null net in $X^*$, inequality \eqref{E:4.3} implies that for each $n\in\Lambda$, every $w^*$-cluster point $\phi$ of $(\phi_{nm})$ is in $K$ such that $\|\phi\|\leq\|T\|$, which yields that ${\rm d}(\phi_{nm},K)\rightarrow0$ for each $n\in\Lambda$. Hence, for each $n\in\Lambda$, there is a net $(\psi_{nm})\subset K$ such that  d$(\phi_{nm},\psi_{nm})\rightarrow0$, or equivalently, $\phi_{nm}-\psi_{nm}\rightarrow0$ in the $w^*$-topology of $Y^*$. Let $S:Y\rightarrow X$ be defined for every $y\in Y$ by
\begin{align*}S(y)= T^{-1}\{\sum_{m\in\Gamma_n}\langle \phi_{nm}-\psi_{nm},y\rangle e_{nm}\}_{n\in\Lambda}\in X.\end{align*} Hence
$$\|S\|\leq2\|T\|\cdot\|T^{-1}\|<2\lambda$$ and

\begin{align}&\|Sf(x)-x\|=\|T^{-1}\{\sum_{m\in\Gamma_n} \langle\phi_{nm}-\psi_{nm},f(x)\rangle e_{nm}\}_{n\in\Lambda}-T^{-1}\{\sum_{m\in\Gamma_n}
 \langle x_{nm}^*,x\rangle e_{nm}\}_{n\in\Lambda}\|\notag\\
 &\leq\|T^{-1}\|\sup_{n\in\Lambda}(\|\sum_{m\in\Gamma_n} \langle\phi_{nm}-\psi_{nm},f(x)\rangle e_{nm}-\sum_{m\in\Gamma_n}
 \langle x_{nm}^*,x\rangle e_{nm}\|)\notag\\
&\leq\|T^{-1}\|\cdot \sup_{n\in\Lambda}\sup_{m\in\Gamma_n}| \langle\phi_{nm},f(x)\rangle-\langle x_{nm}^*,x\rangle -\langle\psi_{nm},f(x)\rangle |\notag\\
&\leq\|T^{-1}\|(\sup_{n\in\Lambda}\sup_{m\in\Gamma_n} |\langle\phi_{nm},f(x)\rangle-\langle x_{nm}^*,x\rangle| +\sup_{n\in\Lambda}\sup_{m\in\Gamma_n}|\langle\psi_{nm},f(x)\rangle|)\notag\\
&\leq 4 \eps\|T\|\cdot\|T^{-1}\|<4\eps\lambda.\notag\end{align}

ii-iii) For each $i\in\Lambda$, $\Gamma_i$ denotes by $B_{E_i^*}$. It suffices to show this case that $X=(\sum_{i\in\Lambda} E_i)c_0$. Let $J: X=(\sum_{i\in\Lambda} E_i)c_0\rightarrow (\sum_{i\in\Lambda}\ell_\infty (B_{E_i^*}))c_0=(\sum_{i\in\Lambda}\ell_\infty (\Gamma_i))c_0$ be the canonical embedding. For each $n\in\Lambda$, let $Q_n:(\sum_{i\in\Lambda}\ell_\infty (\Gamma_i))c_0\rightarrow\ell_\infty (\Gamma_n)$ be the canonical projection. Let $P_n:\ell_\infty (\Gamma_n) \rightarrow E_n$ be a family of projections with $\|P_n\|\leq \lambda$.
 For each $n\in\Lambda$ and $m\in\Gamma_n$, let $e_{nm}\in(\sum_{i\in\Lambda}\ell_\infty(\Gamma_i))_{c_0}$ with the standard biorthogonal functionals $e_{nm}^*\in((\sum_{i\in\Lambda}\ell_\infty(\Gamma_i))_{c_0})^*$ such that $e_{ij}^*(e_{nm})=\delta_{in}\delta_{jm}$.
 Then
$$x=\sum_{n\in\Lambda} \{(e_{nm}^*)(x)\}_{m\in\Gamma_n} \;\;{\rm for\;all}\;x\in X.$$
By Theorem \ref{T:1.3}, for each $n\in\Lambda$ and $m\in\Gamma_n$, there exists
$\phi_{nm}\in B_{Y^*}$ with $\|\phi_{nm}\|=\|e_{nm}^*\|$ such that

\begin{align*}
|\langle\phi_{nm},f(x)\rangle-\langle e_{nm}^*,x\rangle|
\leq2 \eps,\; {\rm for \;all}\; x\in X.\end{align*}
Clearly, $e^*_{nm}\rightarrow0$ uniformly for each $m\in\Gamma_n$ in the $w^*$-topology of $Z^*$. Let
\begin{align*}K=\{\psi\in B(Y^*):|\langle\psi, f(x)\rangle|\leq 2\eps, \; {\rm for \;all}\; x\in X\}.\end{align*}
 Since $\Gamma_n$ can be well ordered for every $n\in \Lambda$, we write $$\Gamma_n=\{0,1,2,\cdots, w_0, w_0+1, \cdots, w_1,\cdots \prec\Gamma_n  \},$$ where  $\Gamma_n$ also denotes by its ordinal number.
 It follows from i) that for each $n\in \Lambda$, there is a net $(\psi_{n0})\subset K$ such that d$(\phi_{n0},\psi_{n0})\rightarrow0$. We can choose $(\psi_{nm})\subset K$ such that for every $n\in \Lambda$ and $m\in\Gamma_n$,
 d$(\phi_{nm}, \psi_{nm})\leq$ d$(\phi_{n0},\psi_{n0})$ or equivalently, $(\phi_{nm}-\psi_{nm})\rightarrow0$ uniformly for each $m\in\Gamma_n$ in the $w^*$-topology of $Y^*$. Let $Q:Y \rightarrow (\sum_{i\in\Lambda}\ell_\infty (\Gamma_i))c_0$ be defined for all $y\in Y$ by
\begin{align*}Q(y)=\sum_{n\in \Lambda}\{\langle\phi_{nm}-\psi_{nm},y\rangle \}_{m\in\Gamma_n} \in (\sum_{i\in\Lambda}\ell_\infty (\Gamma_i))c_0, \end{align*} which yields that \begin{align*}\|Q(y)\|\leq(\sup_{n\in\Lambda, m\in\Gamma_n}\|\phi_{nm}-\psi_{nm}\|)\|y\|\leq2\|y\|.\end{align*}
 Thus $$\|Q\|\leq2.$$
Let $S:Y\rightarrow X$ be defined for all $y\in Y$ by
\begin{align*}S(y)=\sum_{n\in\Lambda} P_nQ_nQ(y)
=\sum_{n\in\Lambda}P_n\{\langle \phi_{nm}-\psi_{nm},y\rangle\}_{m\in\Gamma_n}. \end{align*}
Hence
$$\|S\|=\sup_{n\in\Lambda} \|P_nQ_nQ\|\leq2\lambda$$ and

\begin{align}&\|Sf(x)-x\|=\|\sum_{n\in\Lambda} P_n\{\langle\phi_{nm}-\psi_{nm},f(x)\rangle \}_{m\in\Gamma_n}-\sum_{n\in\Lambda}
 P_n\{\langle e_{nm}^*,x\rangle \}_{m\in\Gamma_n}\|\notag\\
&\leq\lambda\sup_{n\in\Lambda}\sup_{m\in\Gamma_n}|\langle\phi_{nm},f(x)\rangle-\langle e_{nm}^*,x\rangle-\langle\psi_{nm},f(x)\rangle|\notag\\
&\leq\lambda(\sup_{n\in\Lambda}\sup_{m\in\Gamma_n} |\langle\phi_{nm},f(x)\rangle-\langle e_{nm}^*,x\rangle| +\sup_{n\in\Lambda}\sup_{m\in\Gamma_n}|\langle\psi_{nm},f(x)\rangle|)\notag\\
&\leq 4\eps\lambda.\notag\end{align}
Thus, our proof is completed.

\end{proof}

\section{A generalized Figiel theorem for $\mathcal{L}_{\infty,\lambda}$-spaces}

Recall that a Banach space $X$ is said to be a $\mathcal{L}_{\infty,\lambda}$-space if every finite dimensional subspace $F$ of $X$ is contained in another finite dimensional subspace $E$ of $X$ such that $d(E, \ell_\infty^{\dim E})\leq\lambda$. In this case, a Banach space $X$ is said to be a $\mathcal{L}_{\infty}$-space if for some $\lambda>0$, $X$ is a $\mathcal{L}_{\infty,\lambda}$-space(see, for instance, \cite{AH}, \cite {ASC}, \cite{Bou}). For example, a $\lambda$-separably injective Banach space is a $\mathcal{L}_{\infty,9\lambda^+}$-space (see \cite [ p.199, Prop.3.5 (a)]{ASC})and every $C(K)$-space is also a $\mathcal{L}_{\infty}$-space.

\begin{thm}\label{T:1.2}
Suppose that $X$ is a $\mathcal{L}_{\infty,\lambda}$-space and $Y$ is a Banach space.  Then
for every standard $\eps$-isometry $f:X \rightarrow Y$, there is a bounded linear operator $T: Y\rightarrow X^{**}$ such that $Tf-Id$ is uniformly bounded by $ 2 \lambda\varepsilon$ on $X$.

\end{thm}

\begin{proof} By Lemma \ref{main2}, for every $w^*$-dense subset $\Omega\subset$ ext $(B_{X^*})$ there is a bounded linear operator $S: Y\rightarrow \ell_\infty(\Omega)$ with norm one such that

$$\|S f(x)-x\|\leq 2\varepsilon,\;\;\text{for\;all}\;x\in X .$$

Let $X=\cup_{i\in I} E_i$ such that for every $i, j\in (I,\succeq)$, $i\succeq j$ if and only if $E_i\supseteq E_j$ satisfying that for each $i\in I$, $\dim E_i<\infty$ and $d(E_i, \ell_\infty^{\dim E_i})\leq\lambda$. Hence for each $i\in I$, there exists a projection $P_i:\ell_\infty(\Omega)\rightarrow E_i$ such that $\|P_i\|<\lambda+\frac{1}{1+\dim E_i}$. Since $\{P_i\}_{i\in I}$ is uniformly bounded on $B_{\ell_\infty(\Omega)}$, it follows from the Arzel$\grave{a}$-Ascoli theorem that there is a subnet $\{\delta_i\}_{i\in\Lambda}$ of $I$ for an partial order set $\Lambda$ such that $P: \ell_\infty(\Omega)\rightarrow X^{**}$ is well defined by
$$P(y)=w^*-\lim_{i\in\Lambda}P_{\delta_i}(y),\;\text {for\;all}\; y\in \ell_\infty(\Omega),$$
which yields that $$\|P\|\leq \lambda \;\text {and}\; P|_X=Id.$$
Hence
 $$\| Tf(x)-x\|\leq 2\varepsilon\lambda,\;\;\text{for\;all}\;x\in X ,$$
where $T=PS: Y\rightarrow X^{**}$ with $\|T \|\leq\lambda$.
\end{proof}

\section{Acknowledgements}
This work was partially done based on many discussions with Professor W.B. Johnson while the author was visiting Texas A$\&$M University and in Analysis and Probability Workshop at Texas A$\&$M University which was funded by NSF Grant. The author would like to thank Professor W.B. Johnson and Professor Th. Schlumprecht for the invitation. This work is also a part of the author's Ph.D. thesis under the supervision of Professor Lixin Cheng.

The author was Supported by the Natural Science Foundation of China (Grant No. 11601264) and the Outstanding Youth Scientific Research Personnel Training Program of Fujian Province and the High level Talents Innovation and Entrepreneurship Project of Quanzhou City and the Research Foundation of Quanzhou Normal University(Grant No. 2016YYKJ12).


\begin{thebibliography}{HD}

%% Use the widest label as parameter.
%% Reference items can be numbered or have labels of your choice, as below.

%% In IMPAN journals, only the title is italicized; boldface is not used.
%% Our software will add links to many articles; for this, enclosing volume numbers in { } is helpful
%% Do not give the issue number unless the issues are paginated separately.

%%%%%%%%%%% To ease editing, use normal size:

\normalsize
\baselineskip=17pt

%%%%%%%%%%%%%

\bibitem{Alb} F. Albiac, N.J. Kalton, \emph{Topics in Banach Space
Theory}, Graduate Texts in Mathematics 233, Springer, New York, 2006.

\bibitem{Arg} S.A. Argyros, J.M.F. Castillo, A.S. Granero, M. Jim$\acute{e}$nez, J.P. Moreno,  \emph{ Complementation and embeddings of $c_0(I)$ in Banach spaces}, Proc. Lond. Math. Soc. {85} (2002) 742-772.

\bibitem{AH} S.A. Argyros, R.G. Haydon, \emph{ A hereditarily indecomposable $\mathcal{L_\infty}$-space that solves the scalar-plus-compact problem}, Acta Math. {206} (1) (2011) 1-54.

\bibitem{ASC} A. Avil$\acute{e}$s, F.C. S$\acute{a}$nchez, Jes$\acute{u}$s M. F. Castillo, M. Gonz$\acute{a}$lez and Y. Moreno, \emph{On separably injective Banach Spaces}, Advances in Mathematics {234} (2013) 192-216.

\bibitem {bao} L. Bao, L. Cheng, Q. Cheng, D. Dai,  \emph{On universally left-stability of $\eps$-isometry}, Acta Math. Sin., Engl. Ser. {29} (11) (2013) 2037--2046.

\bibitem {ben1} Y. Benyamini, An M-space which is not isomorphic to a $C(K)$-space, Israel J. Math. {28} (1977) 98-104.

\bibitem {ben} Y. Benyamini, J. Lindenstrauss, \emph{Geometric Nonlinear Functional Analysis I}, Amer. Math. Soc. Colloquium Publications, Vol.48,
 Amer. Math. Soc., Providence, RI, 2000.

\bibitem{Bou} J. Bourgain, F. Delbaen, \emph{ A class of special $\mathcal{L_\infty}$-space}, Acta Math. {145} (1980) 155-176.

\bibitem {Cas}J.M.F. Castillo, Y. Moreno, \emph{Sobczyk's theorem and the bounded approximation property}, Studia Math. {201} (2010) 1-19.

\bibitem {cheng} L. Cheng, Y. Dong, W. Zhang, \emph{On stability of nonlinear non-surjective $\varepsilon$-isometries of Banach spaces}, J. Funct. Anal. {264} (2013) 713--734.

\bibitem {cheng2} L. Cheng, D. Dai, Y. Dong et.al., \emph{Universal stability of Banach spaces for $\varepsilon$-isometries}, Studia Math. 221 (2014),141-149. \href{http://arxiv.org/abs/1301.3374}{arXiv:1301.3374v4.}

\bibitem {cheng3} L. Cheng, Q. Cheng, K. Tu, J. Zhang \emph{A universal theorem for stability of $\varepsilon$-isometries of Banach spaces}, J. Funct. Anal. {269} (2015) 199--214.

\bibitem {Dai} D. Dai, Y. Dong, \emph{On stability of Banach spaces via nonlinear $\varepsilon$-isometries}, J. Math. Anal. Appl. {414} (2014) 996--1005.

\bibitem {dil} S.J. Dilworth, \emph{Approximate isometries on finite-dimensional normed spaces}, Bull. London Math. Soc. {31} (1999) 471--476.

\bibitem {du} Y. Dutrieux, G. Lancien, \emph{Isometric embeddings of compact
spaces into Banach spaces}, J. Funct. Anal. {255} (2008) 494--501.


\bibitem{fa} M. Fabian, P. Habala, P. H\'{a}jek, V. Montesinos, V. Zizler, \emph{Banach Space Theory. The Basis for Linear and Nonlinear Analysis}, Springer, New York, 2011.

\bibitem {figiel} T. Figiel, \emph{On non linear isometric embeddings of normed
linear spaces}, Bull. Acad. Polon. Sci. Math. Astro. Phys. {16} (1968)
185--188.

\bibitem {ge} J. Gevirtz, \emph{Stability of isometries on Banach spaces},
Proc. Amer. Math. Soc. {89} (1983) 633--636.

\bibitem {gru} P.M. Gruber, \emph{Stability of isometries}, Trans. Amer. Math. Soc. {245}
(1978) 263--277.

\bibitem {hy} D.H. Hyers, S.M. Ulam, \emph{On approximate isometries}, Bull.
Amer. Math. Soc. {51} (1945) 288--292.


\bibitem {John2}W.B. Johnson, J. Lindenstrauss, \emph{Some remarks on weakly compactly generated Banach spaces}, Israel J. Math. {17} (1974) 219-230.

\bibitem {John} W.B. Johnson, T. Oikhberg, \emph{Separable lifting property and extensions of local reflexivity}, Illinois J. Math. {45} (2001) 123-137.

\bibitem {lin} J. Lindenstrauss, \emph{On the extension of operators with range in a $C(K)$ space}, Proc. Amer. Math. Soc. {15} (1964) 218-225.


\bibitem {ma} S. Mazur, S. Ulam, \emph{Sur les transformations isom\'{e}triques d'espaces vectoriels norm\'{e}s}, C.R. Acad. Sci. Paris {194} (1932) 946--948.


\bibitem {om} M. Omladi\v{c}, P. \v{S}emrl, \emph{On non linear perturbations of
isometries}, Math. Ann. {303} (1995) 617--628.

\bibitem{Phe} R.R. Phelps, \textit{Convex functions, monotone operators and differentiability},
Lecture Notes in Mathematics 1364, Springer, 1989.


\bibitem {Ros}H.P. Rosenthal, The complete separable extenson property, J. Operator Theory {43} (2000) 329-374.


\bibitem {qian} S. Qian,  \emph{$\varepsilon$-Isometric embeddings}, Proc. Amer. Math. Soc. {123} (1995) 1797--1803.

\bibitem {San} F. Cabello S$\acute{a}$nchez, \emph{Yet another proof of Sobczyk's theorem}, in: Methods in Banach Space Theory, in: London Math. Soc. Lecture Notes, Vol.337, Cambridge Univ. Press, 2006, pp. 133-138.

\bibitem {sm} P. \v{S}emrl, J. V\"{a}is\"{a}l\"{a}, \emph{Nonsurjective nearisometries of Banach spaces},
J. Funct. Anal. {198} (2003) 268--278.

\bibitem{sob} A. Sobczyk, \emph{Projection of the space (m) on its subspace $c_0$}, Bull.
Amer. Math. Soc. {47} (1941) 938--947.

\bibitem{ta} J. Tabor, \emph{Stability of surjectivity}, J. Approx. Theory {105} (2000) 166--175.

\bibitem{Wo} J. Wolfe,  \emph{Injective Banach spaces of type $C(T)$}, Israel J. Math. {18} (1974) 133--140.
\bibitem{zip} M. Zippin, \emph{The separable extension problem}, Israel J. Math. {26}
(1977) 372--387.

\end{thebibliography}
\end{document}